 \newtheorem{lemma}{Lemma}
 \newtheorem{theorem}{Theorem}
 \newtheorem{remark}{Remark}
 \newtheorem{example}{Example}
 \newtheorem{definition}{\bf Definition}
\begin{document}
 \begin{center}
 {\bf \Large Riemann surface of complex logarithm and multiplicative calculus }\\[0.2cm]
 {\bf Agamirza E. Bashirov}\\[0.2cm]
 {Department of Mathematics, Eastern Mediterranean University\\
 Gazimagusa, Mersin 10, Turkey and\\
 Institute of Control Systems, ANAS, Baku, Azerbaijan}\\
 \verb"agamirza.bashirov@emu.edu.tr"\\[0.2cm]
 {\bf Sajedeh Norozpour}\\[0.2cm]
 {Department of Mathematics, Eastern Mediterranean University\\
 Gazimagusa, Mersin 10, Turkey\\
 \verb"sajedeh.norozpour@cc.emu.edu.tr"} 
 \end{center}
{\small {\bf Abstract.} Could elementary complex analysis, which covers the topics such as algebra of complex numbers, elementary complex functions, complex differentiation and integration, series expansions of complex functions, residues and singularities, and introduction to conformal mappings, be made more elementary? In this paper we demonstrate that a little reorientation of existing elementary complex analysis brings a lot of benefits, including operating with single-valued logarithmic and power functions, making the Cauchy integral formula as a part of fundamental theorem of calculus, removal of residues and singularities, etc. Implicitly, this reorientation consists of resolving the multivalued nature of complex logarithm by considering its Riemann surface. But instead of the advanced mathematical concepts such as manifolds, differential forms, integration on manifolds, etc, which are necessary for introducing complex analysis in Riemann surfaces, we use rather elementary methods of multiplicative calculus. We think that such a reoriented elementary complex analysis could be especially successful as a first course in complex analysis for the students of engineering, physics, even applied mathematics programs who indeed do not see a second and more advanced course in complex analysis. It would be beneficial for the students of pure mathematics programs as well because it is more appropriate introduction to complex analysis on Riemann surfaces rather than the existing one.
}\\[0.3cm]
{\small {\bf Keywords.} Complex analysis, complex logarithm, complex exponent, complex differentiation, complex integration, multiplicative calculus.}\\[0.3cm]
{\small {\bf AMS Subject Classification.} Primary: 30E20; Secondary: 30E99}\\[0.3cm]
{\small {\bf Corresponding Author: Agamirza E. Bashirov} \verb"agamirza.bashirov@emu.edu.tr"; Tel: +90 392 6301338; Fax: +90 392 3651604}


 \section{Introduction}
 
Elementary complex analysis is a course that is taught for students of pure and applied mathematics, physics, and engineering programs. This course does not use the concept of Riemann surfaces and, therefore, presents logarithmic and power functions in a multivalued form. This makes elementary complex analysis insufficiently elementary. An implementation of Riemann surfaces, which treats the preceding and other multivalued functions as single-valued, comes later after getting knowledges in topology, manifolds, differential forms, integration on manifolds, etc. In most universities students of applied mathematics, physics, and engineering programs do not see this implementation. Is it possible to make the elementary part of complex analysis more elementary and at the same time be closer to the advanced part? 
 
 This paper demonstrates that if elementary complex analysis is interpreted in the frame of multiplicative calculus, then it turns to be more elementary. In fact, this is a consideration of complex analysis in one particular Riemann surface (generated by complex logarithm), but does not use heavy machinery of Riemann surfaces. Therefore, multiplicative version of complex analysis also becomes a good introductory basis for complex analysis on Riemann surfaces.
 
Elementary complex analysis starts with the field $\mathbb{C}$ of complex numbers and operations on them. Very soon by the de'Moivre's formula, which states that the number of $n$th roots of any nonzero complex number is exactly $n$, it becomes clear that the students should expect complications related to this formula. Later this result lies down to the definition of the complex power function that becomes multi-valued. Generally, it has countably many branches, it has finitely many branches just for rational powers that turns to a single branch for integer powers. Another elementary complex function, met at the beginning, is the argument function $\arg z$ with the principal branch $\mathrm{Arg}\,z$. This function has again countably many branches. The most important multivalued function is the complex logarithmic function $\log z=\ln |z|+i(\mathrm{Arg}\,z+2\pi n)$, where $|z|$ is the modulus of $z$ and $i$ is the imaginary unit.

Resolving the multivalued nature of these functions without using heavy machinery of Riemann surfaces can essentially simplify elementary complex analysis. To do so, a "trick" that considers only the principal branches is used. This idea is used in real analysis as well. For example, the familiar $\sin x$ of real variable has a multi-valued inverse. In order to make it single-valued, we just invert its restriction to the interval $[-\frac{\pi }{2},\frac{\pi }{2}]$. This suffices for the needs of real analysis. But for the elementary complex analysis, consideration of only principal branches is insufficient.

The multi-valued nature of the previously mentioned complex functions is due to their definition over the polar representation of complex numbers which is non-unique. In fact, there is a lack of space in the complex plane, that contains just one complex number, corresponding to its distinct polar representations. Introducing the Riemann surface of the complex logarithm solves this insufficiency. But the issue is how to do it for students who are not yet aware about topology, differential forms, integration on manifolds, etc. In this paper we are going to demonstrate that this issue can be overcame by using easily understandable concepts of non-Newtonian calculi, specifically, calculus generated by exponential function. 

The paper is constructed in the following way. In a few following sections we discuss some elements towards the basic ideas of the simplification and then in the concluding section of the paper we propose a draft outline of an alternative elementary complex analysis. Since the discussion is concentrated on the elementary part of complex analysis, we call it as complex calculus and the alternative elementary complex analysis as complex *calculus.


\section{The system $\mathbb{B}$}

We differ distinct polar representations of the same complex number by introducing \begin{equation}
 \label{1}
 \mathbb{B}=\{ (r,\theta ):r>0,\ -\infty <\theta <\infty \} .
 \end{equation}
Zero will be considered out of $\mathbb{B}$ while it belongs to the system $\mathbb{C}$ of complex numbers and the system $\mathbb{R}$ of real numbers. This is because the addition operation of $\mathbb{C}$ as well as $\mathbb{R}$ is not fully functioning in $\mathbb{B}$ while the multiplication operation has a perfect extension. Therefore, it is reasonable to consider $\mathbb{B}$ as a multiplicative group rather than a field. One can see that in fact $\mathbb{B}$ is the Riemann surface of the complex logarithm although this could not be mentioned explicitly.

It is reasonable to call the subset 
 \[
 \mathbb{B}_\alpha =\{ (r,\theta ):r>0,\ \alpha -\pi \le \theta <\alpha +\pi \} 
 \] 
of $\mathbb{B}$ as an $\alpha $-branch of $\mathbb{B}$. In fact
 \[
 \mathbb{B}=\bigcup _{n=-\infty }^\infty \mathbb{B}_{2\pi n}\ \text{with}\ \mathbb{B}_{2\pi n}\cap \mathbb{B}_{2\pi m}=\varnothing \ \text{for}\ n\not= m .
 \]
We will use the identification
 \[
 \mathbb{B}_0 \ni (r,\theta )=re^{i\theta }\in \mathbb{C}\setminus \{0\},
 \] 
where $\theta $ stands for the principal argument of the complex number $z=re^{i\theta }$. In this regard, $\mathbb{C}\setminus \{ 0\} $ is a proper subset of $\mathbb{B}$. Similarly, nonzero real numbers belong to $\mathbb{B}$ with representations $x=(x,0)$ if $x>0$ and $x=(-x,-\pi )$ if $x<0$. 

To remove possible ambiguities, the elements of $\mathbb{B}$ will be denoted by boldface letters like $\mathbf{z}$. Every $\mathbf{z}=(r,\theta )\in \mathbb{B}$ produces the nonzero complex number $z=r(\cos \theta +i\sin \theta )$. This function will be denoted by $z=\mathbf{l}(\mathbf{z})$. In fact, the function $\mathbf{l}$ is the periodic extension from $\mathbb{B}_0$ to $\mathbb{B}$ of the preceding identification $\mathbb{B}_0=\mathbb{C}\setminus \{ 0\} $. In the sequel, we will use boldface letters like $\mathbf{f}$ for functions with domain or range essentially in $\mathbb{B}$. The symbols like $f$ will be used for functions with domain and range in $\mathbb{C}$. We will refer to an element from $\mathbb{R}$ as $r$-number, from $\mathbb{C}$ as $c$-number, and from $\mathbb{B}$ as $b$-number for brevity.  

Not everything is so excellent in $\mathbb{B}$. In particular, the vector addition and subtraction of $c$-numbers lose the sense for $b$-numbers. Instead, the multiplication and division operations are nicely  extendable to $\mathbb{B}$ as follows
 \begin{equation}
 \label{2}
 \mathbf{z}_1\mathbf{z}_2=(r_1,\theta _1)(r_2,\theta _2)=(r_1r_2,\theta _1+\theta _2)
 \end{equation}
and
 \begin{equation}
 \label{3}
\frac{\mathbf{z}_1}{\mathbf{z}_2}=\frac{(r_1,\theta _1)}{(r_2,\theta _2)}=(r_1/r_2,\theta _1-\theta _2).
 \end{equation}
Therefore, we should avoid addition and subtraction and study functions on $\mathbb{B}$ via differentiation and integration on the basis of multiplication and division operations. This can be achieved by implementation of multiplicative definitions of derivative and integral.


 \section{Non-Newtonian Calculi}
 
Differential and integral calculus, which is briefly called Newtonian calculus, was created by Isaak Newton and Gottfried Wilhelm Leibnitz in the second half of the 17th century.  This calculus studies functions via differentiation and integration with reference to linear functions. This means that there is a system of relationships between certain mathematical concepts, which we name as CALCULUS (its complex part as COMPLEX CALCULUS), and Newtonian calculus is its realization in the form of comparison of all functions with linear functions. If we choose other reference functions, we could realize CALCULUS differently. This issue is similar to a (perfect) translation of a story from one language to another. In the second half of the 60th decade Michael Grossman and Robert Katz \cite{GK} pointed out the realizations of CALCULUS which are different from Newtonian one, calling them as non-Newtonian calculi.

Briefly, if $\alpha $ is a bijection from $\mathbb{R}$ to the interval $\mathbb{I}$, then addition and multiplication of $\mathbb{R}$ can be isometrically transferred to $\mathbb{I}$ by letting
 \[
 a\oplus _\alpha b=\alpha (\alpha ^{-1}(a)+\alpha ^{-1}(b))\ \ \text{and}\ \ a\otimes _\alpha b=\alpha (\alpha ^{-1}(a)\times\alpha ^{-1}(b)).
 \]
Similarly, for $\alpha $-difference and $\alpha $-ratio we let
 \[
 a\ominus _\alpha b=\alpha (\alpha ^{-1}(a)-\alpha ^{-1}(b))\ \ \text{and}\ \ a\oslash _\alpha b=\alpha (\alpha ^{-1}(a)\slash\alpha ^{-1}(b)).
 \]
These operations of $\alpha $-arithmetic simplify many formulae which are less usual in ordinary arithmetic. For example, in Einstein's theory of relativity the speeds $a$ and $b$ are added as
 \[
 \frac{a+b}{1+ab/c^{2}},
 \]
where $c$ is the speed of light. Let $\tilde{a}=a/c$ and $\tilde{b}=b/c$ be the respective relative speeds. Then (see \cite{B2})
 \[
 \frac{a+b}{1+ab/c^{2}}=\tilde{a}\oplus _{\mathrm{tanh}}\tilde{b},
 \]
where $\tanh x$ is the hyperbolic tangent function. This means that formulae in theory of relativity can be more visibly written in terms of $\tanh $-arithmetic.

Since the differentiation and integration are infinitesimal versions of subtraction and addition, for a suitable function $\alpha $, $\alpha $-arithmetic induces $\alpha $-derivative and $\alpha $-integral of functions on $\mathbb{I}$ as
 \[
 \frac{d_\alpha }{dx}f(x)=\alpha \bigg( \frac{d}{dx}\alpha ^{-1}(f(x)) \bigg)
 \]
and
 \[
 (\alpha )\int _a^bf(x)\,dx=\alpha \bigg( \int _a^b\alpha ^{-1}(f(x))\,dx\bigg) .
 \]
 
The most popular $\alpha $-calculus is exp-calculus with a reference to exponential function $\alpha (x)=e^x$. This calculus is also named as multiplicative calculus, or briefly *calculus, because exp-addition is the ordinary product:
 \[
 a\oplus _{\rm exp}b=e^{\ln a+\ln b}=ab.
 \]
*Calculus is a suitable calculus for growth related problems. This and other features of *calculus are discussed in \cite{B1, B2}. Numerical methods via *calculus are developed in \cite{BER, OB, ORBB, RA, ROK}.
Since the addition on $\mathbb{B}$ has weaknesses while the multiplication is powerful, we can employ multiplicative type derivative and integral for functions $\mathbf{f}:B\subseteq \mathbb{B}\to \mathbb{B}$. 

The multiplicative type of concepts are briefly called as *concepts. Previously, an attempt to study *derivative and *integral have been made for functions $f:C\subseteq \mathbb{C}\to \mathbb{C}$ in \cite{B4, B5}. While *derivative was found satisfactory, *integral met difficulties such as it was defined as a multivalued function because complex log-function is multivalued. Therefore, to go on we need in logarithmic and exponential functions over the system $\mathbb{B}$, use them to create functions $\mathbf{f}:B\subseteq \mathbb{B}\to \mathbb{B}$, and establish *calculus for them.   


\section{Logarithms and exponents over $\mathbb{B}$}

On the basis of $\log z=\ln |z|+i(\mathrm{Arg}\, z+2\pi n)$ for $z\in \mathbb{C}\setminus \{ 0\} $, extend log-function to $\mathbb{B}$ as  
 \begin{equation}
 \label{4}
 \mathbf{log}\,\mathbf{z}=\ln r+i\theta ,\ \mathbf{z}=(r,\theta )\in \mathbb{B},
 \end{equation}
where $\ln r$ is the natural logarithm of the $r$-number $r$. So, $\mathbf{log}:\mathbb{B}\to \mathbb{C}$ becomes a bijection. We define a new exponential function by   
 \begin{equation}
 \label{5}
 \mathbf{e}^z=(e^x,y),\ z=x+iy\in \mathbb{C},
 \end{equation}
as a function from $\mathbb{C}$ to $\mathbb{B}$. Here $e^a$ is a natural exponent of the $r$-number $a$ (the same symbol will be used for complex exponents in the form $e^z$ as well while $\mathbf{e}^z$ is the new exponential function with values in $\mathbb{B}$). It can verified that $\mathbf{log}\, \mathbf{e}^z=z$ and $\mathbf{e}^{\mathbf{log }\,\mathbf{z}}=\mathbf{z}$ for all $z\in \mathbb{C}$ and $\mathbf{z}\in \mathbb{B}$. So, they are the inverse of each other. These functions play an underlying role in setting calculus over $\mathbb{B}$. 

With the multiplication operation (\ref{2}) on $\mathbb{B}$ we can establish the following familiar properties of logarithm and exponent. For every $\mathbf{z}_1=(r_1,\theta _1)$ and $\mathbf{z}_2=(r_2,\theta _2)$ from $\mathbb{B}$, 
 \[
 \mathbf{log}\,(\mathbf{z}_1\mathbf{z}_2)=\mathbf{log}\,\mathbf{z}_1+\mathbf{log}\,\mathbf{z}_2
 \]
since
 \begin{align*}
 \mathbf{log}\,(\mathbf{z}_1\mathbf{z}_2)
  & =\mathbf{log}\,((r_1,\theta _1)(r_2,\theta _2))=\mathbf{log}\,(r_1r_2,\theta _1+\theta _2)=\ln (r_1r_2)+i(\theta _1+\theta _2)\\
  & =\ln r_1+\ln r_2+i\theta _1+i\theta _2=(\ln r_1+i\theta _1)+(\ln r_2+i\theta _2)\\
  & =\mathbf{log}\,(r_1,\theta _1)+\mathbf{log}\,(r_2,\theta _2)=\mathbf{log}\,\mathbf{z}_1+\mathbf{log}\,\mathbf{z}_2.
 \end{align*} 
Similarly, for every $z_1=x_1+iy_1$ and $z_2=x_2+iy_2$ from $\mathbb{C}$, 
 \[
 \mathbf{e}^{z_1+z_2}=\mathbf{e}^{z_1}\mathbf{e}^{z_2}
 \]
since 
 \begin{align*}
 \mathbf{e}^{z_1+z_2}
  & =e^{(x_1+x_2)+i(y_1+y_2)}=(e^{x_1+x_2},y_1+y_2)\\
  & =(e^{x_1}e^{x_2},y_1+y_2)=(e^{x_1},y_1)(e^{x_2},y_2)=\mathbf{e}^{z_1}\mathbf{e}^{z_2}. 
 \end{align*}
 

\section{Powers over $\mathbb{B}$} 

The multiplication operation on $\mathbb{B}$  induces a natural power of $\mathbf{z}=(r,\theta )$ as
 \[
 \mathbf{z}^n=(r,\theta )^n=(r^n, n\theta ),
 \]
and taking into consideration that the $r$-number $1$ has the form $(1,0)$ as a $b$-number, negative integer power is induced as
 \[
 \mathbf{z}^{-n}=(r,\theta )^{-n}=(1/r^n, -n\theta ),
 \] 
together with $\mathbf{z}^0=(1,0)\in \mathbb{B}$.  The $n$th root can be defined as a solution of $\mathbf{w}^n=\mathbf{z}$ as
 \[
 \mathbf{w}=\sqrt[n]{\mathbf{z}}=\sqrt[n]{(r,\theta )}=(\sqrt[n]{r}, \theta /n),
 \] 
and hence, the rational power as
 \[
 \mathbf{z}^p=(r,\theta )^p=(r^p, p\theta ).
 \]
The complex power $w=u+iv$ of the $b$-number $\mathbf{z}=(r,\theta )$ can be defined by 
 \begin{equation}
 \label{6}
 \mathbf{z}^w=\mathbf{e}^{w\log \mathbf{z}}=(e^{u\ln r-v\theta }, u\theta +v\ln r)
 \end{equation}
since
 \begin{align*}
 \mathbf{z}^w
  & =\mathbf{e}^{w\log \mathbf{z}}=\mathbf{e}^{(u+iv)(\ln r+i\theta )}\\
  & =\mathbf{e}^{(u\ln r-v\theta )+i(u\theta +v\ln r)}=(e^{u\ln r-v\theta }, u\theta +v\ln r)
 \end{align*}
and this agrees with the previously derived formula for rational powers. Thus the power function $\mathbf{f}(w)=\mathbf{z}^w$ for $w\in \mathbb{C}$ is a single-valued function. In particular, 
 \[
 \sqrt{(r,0)}=(\sqrt{r},0)\ \text{and}\  \sqrt{(r,-2\pi)}=(\sqrt{r},-\pi ).
 \]
This means that the square root of a positive $r$-number $r$ in the sense of $\mathbb{B}$ is unique and equals to $\sqrt{r}$. Its second root in the sense of $\mathbb{R}$, that is, $-\sqrt{r}$ is now the root of $(r,-2\pi )$ in the sense of $\mathbb{B}$. Therefore, the positive and negative roots of a positive $r$-number are the unique roots of distinct $b$-numbers which are identified in $\mathbb{R}$.  
  

\section{Other functions}

Every single-valued function $f$ of complex calculus generates its analog in complex *calculus which we denote by $\mathbf{f}$. The link between $f$ and $\mathbf{f}$ is as follows:
 \[
 \mathbf{f}(\mathbf{z})=\mathbf{e}^{f(\mathbf{log}\,\mathbf{z})},\ \mathbf{z}\in B\subseteq \mathbb{B}.
 \] 
The return link is
 \[
 f(z)=\mathbf{log}\,(\mathbf{f}(\mathbf{e}^{z})),\ z\in C\subseteq \mathbb{C}.
 \] 
 
In fact, $\mathbf{e}^z$, $z\in \mathbb{C}\subseteq \mathbb{B}$, defined by (\ref{5}), is the analog of the function $e^z$, $z\in \mathbb{C}$. Noting that $z\in \mathbb{C}$ can be interpreted as $c$-number $z=x+iy=re^{i\theta }$ with $x=r\cos \theta $ and $y=r\sin \theta $ as well as $b$-number $z=(r,\theta )$, this can be deduces as follows: 
 \begin{align*}
 \mathbf{e}^{e^{\mathbf{log}\,z}}
  &=\mathbf{e}^{e^{\mathbf{log}\,(r,\theta )}} & (\textrm{writing}\ z\in \mathbb{C}\ \text{as}\ b\text{-number}\ (r,\theta ))  \\
  &=\mathbf{e}^{e^{\ln r+i\theta }} & (\textrm{definition\ of}\ \mathbf{log}\text{-function})\\
  &=\mathbf{e}^{r(\cos \theta +i\sin \theta )} & (\textrm{definition\ of\ exp-function})\\
  &=(e^{r\cos \theta },r\sin \theta ) & (\textrm{definition\ of}\ \mathbf{exp}\text{-function})\\
  &=(e^x,y) & (\textrm{using}\ x=r\cos \theta \ \text{and}\ y=r\sin \theta )\\ 
  &=\mathbf{e}^z & (\textrm{using}\ z=x+iy) 
 \end{align*}
In particular,
 \[
 \mathbf{e}^z=(e^{r\cos \theta },r\sin\theta )\ \text{for}\ z=re^\theta \in \mathbb{C}.
 \] 
 
This does not expand to multivalued functions such as logarithmic and power functions, which are already defined by (\ref{4}) and (\ref{6}), but allows to create analogs of trigonometric and hyperbolic functions. In such a way,
  \begin{align*}
 \mathbf{cosh}\,\mathbf{z}
  &=\mathbf{e}^{\frac{1}{2}(e^{\mathbf{log}\,\mathbf{z}}+e^{-\mathbf{log}\,\mathbf{z}})}\\
  &=\mathbf{e}^{\frac{1}{2}(e^{\ln r+i\theta }+e^{-\ln r-i\theta })}\\
  &=\mathbf{e}^{\frac{1}{2}(r(\cos \theta +i\sin \theta )+r^{-1}(\cos \theta -i\sin \theta ))}  \\
  &=\mathbf{e}^{\frac{r+r^{-1}}{2}\cos \theta }\mathbf{e}^{i{\frac{r-r^{-1}}{2}\sin \theta }}  \\
  &=\mathbf{e}^{\cosh (\ln r)\cos \theta }\mathbf{e}^{i\sinh (\ln r)\sin \theta }  \\
  &=(e^{\cosh (\ln r)\cos \theta },\sinh (\ln r)\sin \theta )  
 \end{align*}
Similarly, one can derive
 \begin{align*}
 \mathbf{sinh}\,\mathbf{z} & =(e^{\sinh (\ln r)\cos \theta },\cosh (\ln r)\sin \theta ),\\
 \mathbf{cos}\,\mathbf{z} & =(e^{\cosh \theta \cos (\ln r)},-\sinh \theta \sin (\ln r)),\\
 \mathbf{sin}\,\mathbf{z} & =(e^{\cosh \theta \sin (\ln r)},\sinh \theta \cos (\ln r)).
 \end{align*} 


\section{*Derivative of functions over $\mathbb{B}$}

According to Section 3, the *derivative of non-vanishing function $f:C\subset \mathbb{C}\to \mathbb{C}\setminus \{ 0\} $ is defined by
 \[
 f^*(z)=e^{(\log f(z))'}=e^{\frac{f'(z)}{f(z)}}
 \]
as a single-valued function \cite{B4}. This formula for a function $f:R\subseteq \mathbb{R}\to (0,\infty )$ looks like 
 \[
 f^*(x)=e^{(\ln f(x))'}=e^{\frac{f'(x)}{f(x)}}.
 \] 
To extend *derivative to functions $\mathbf{f}:B\subseteq \mathbb{B}\to \mathbb{B}$, we will need the following.
 \begin{lemma}
 \label{L1}
Let $f$ be a non-vanishing function from some nonempty open connected subset $C$ of $\mathbb{C}$ to $\mathbb{C}$. Assume that $f$ has the rectangular and polar representations
 \[
 f(z)=u(r,\theta )+iv(r,\theta )=R(r,\theta )e^{i\Theta (r,\theta )}\ \text{for}\ z=re^{i\theta } .
 \]
If $f'(z)$ exists and, respectively, the Cauchy--Riemann conditions in polar form
 \[
 ru'_r=v'_\theta ,\ rv'_r=-u'_\theta
 \]
hold, then 
 \begin{description}
 \item [\rm (a)]$r(\ln R)'_r=\Theta '_\theta $ and $r\Theta '_r=-(\ln R)'_\theta $,
 \item [\rm (b)]$f'(z)=e^{i(\Theta -\theta )}(R'_r+iR\Theta '_r)$,
 \item [\rm (c)]$(\log f(z))'=((\ln R)'_r\cos \theta +\Theta '_r\sin \theta )+i(\Theta '_r\cos \theta -(\ln R)'_r\sin \theta ) $, assuming that $f$ transfers $C$ into a branch of log-function.
 \end{description}
 \end{lemma}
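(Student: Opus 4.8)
The plan is to reduce everything to direct computation, since each of (a)--(c) is an algebraic consequence of the hypotheses once the polar data is written out explicitly. First I would record what the non-vanishing hypothesis buys us: $R(r,\theta)>0$ throughout $C$, so that $\ln R$ and all the quotients by $R$ appearing below are legitimate. Then, from $u=R\cos\Theta$ and $v=R\sin\Theta$, I would compute the four partials
\[
u'_r=R'_r\cos\Theta-R\Theta'_r\sin\Theta,\qquad u'_\theta=R'_\theta\cos\Theta-R\Theta'_\theta\sin\Theta,
\]
\[
v'_r=R'_r\sin\Theta+R\Theta'_r\cos\Theta,\qquad v'_\theta=R'_\theta\sin\Theta+R\Theta'_\theta\cos\Theta .
\]

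For part (a) I would substitute these into the polar Cauchy--Riemann identities $ru'_r=v'_\theta$ and $rv'_r=-u'_\theta$ and then take the two orthogonal linear combinations: multiplying the first relation by $\cos\Theta$ and the second by $\sin\Theta$ and adding collapses, via $\cos^2\Theta+\sin^2\Theta=1$, to $rR'_r=R\Theta'_\theta$; multiplying the first by $-\sin\Theta$ and the second by $\cos\Theta$ and adding collapses to $rR\Theta'_r=-R'_\theta$. Dividing both by $R>0$ converts $R'_r/R$ into $(\ln R)'_r$ and $R'_\theta/R$ into $(\ln R)'_\theta$, giving exactly $r(\ln R)'_r=\Theta'_\theta$ and $r\Theta'_r=-(\ln R)'_\theta$. (One may note in passing that this pair is precisely the polar Cauchy--Riemann system for the function $\ln R+i\Theta$.)

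For part (b) I would bypass $u,v$ and differentiate along rays: from $z=re^{i\theta}$ we get $\partial z/\partial r=e^{i\theta}$, so complex differentiability of $f$ gives $\partial_r f(re^{i\theta})=f'(z)e^{i\theta}$, hence $f'(z)=e^{-i\theta}\partial_r f$. Differentiating $f=Re^{i\Theta}$ in $r$ yields $\partial_r f=(R'_r+iR\Theta'_r)e^{i\Theta}$, whence $f'(z)=e^{i(\Theta-\theta)}(R'_r+iR\Theta'_r)$. Finally, for part (c), the branch hypothesis makes $\log f(z)=\ln R+i\Theta$ a well-defined holomorphic function on $C$, so $(\log f(z))'=f'(z)/f(z)$; dividing the formula of (b) by $f(z)=Re^{i\Theta}$ gives $(\log f(z))'=e^{-i\theta}\bigl((\ln R)'_r+i\Theta'_r\bigr)$, and expanding $e^{-i\theta}=\cos\theta-i\sin\theta$ and separating real and imaginary parts produces the stated expression.

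There is no serious obstacle here; the only things to watch are bookkeeping. In (a) one must pick the right linear combinations of the two Cauchy--Riemann relations so that the trigonometric factors cancel cleanly, and one must invoke $R>0$ before passing to logarithmic derivatives. In (b)--(c) the one conceptual point is the branch hypothesis in (c): it is exactly what is needed to regard $\log f$ as a genuine single-valued holomorphic function on the connected set $C$ (with $\Theta$ a continuous branch of $\arg f$), so that the chain rule identity $(\log f)'=f'/f$ applies.
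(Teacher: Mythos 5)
Your proof is correct, and it reaches the same three conclusions by a route that differs from the paper's in its computational direction. For part (a), the paper differentiates $R=\sqrt{u^2+v^2}$ and $\Theta=\mathrm{atan2}\,(v,u)$ to express $R'_r,R'_\theta,\Theta'_r,\Theta'_\theta$ in terms of the partials of $u$ and $v$, and then substitutes the polar Cauchy--Riemann conditions; you instead go the other way, differentiating $u=R\cos\Theta$, $v=R\sin\Theta$ and extracting the two relations by taking the $\cos\Theta$/$\sin\Theta$ linear combinations. Your direction avoids the $\mathrm{atan2}$ branch issues and the division by $R^2$ inside the derivative formulas, at the cost of implicitly assuming from the outset that $R$ and $\Theta$ are differentiable (the paper's direction derives their differentiability from that of $u$ and $v$, which matters since $R,\Theta$ are the primitive data in the later definition of *differentiability on $\mathbb{B}$). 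For part (b), the paper inverts a $2\times 2$ matrix to solve for $u'_r, v'_r$ in terms of $R'_r,\Theta'_r$ and then substitutes into $f'(z)=e^{-i\theta}(u'_r+iv'_r)$; your direct differentiation of $f=Re^{i\Theta}$ along rays, $\partial_r f=(R'_r+iR\Theta'_r)e^{i\Theta}$, is shorter and cleaner and gives the identical formula. Part (c) is handled the same way in both: the branch hypothesis legitimizes $(\log f)'=f'/f$, and expanding $e^{-i\theta}$ yields the stated real and imaginary parts. Both arguments are sound; yours is the more economical, the paper's the more self-contained about where the differentiability of $R$ and $\Theta$ comes from.
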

 \begin{proof}
In fact, part (a) expresses a version of the Cauchy--Riemann conditions when both arguments and values of a complex function are represented in the polar form. Most textbooks  do not include this version of the Cauchy--Riemann conditions. Therefore, we derive them because $b$-numbers are based on just polar representation. 

We start from $R=R(r,\theta )=\sqrt{u^2(r,\theta )+v^2(r,\theta )}$ and calculate  
 \[
 R'_r=\frac{uu'_r+vv'_r}{\sqrt{u^2+v^2}}=\frac{uu'_r+vv'_r}{R}
 \]
and
 \[
 R'_\theta =\frac{uu'_\theta +vv'_\theta }{\sqrt{u^2+v^2}}=\frac{uu'_\theta +vv'_\theta }{R}.
 \] 
Next, $\Theta (r,\theta )=\mathrm{atan2}\,(v(r,\theta ),u(r,\theta ))$, where the $\mathrm{atan2}$-function is the $\mathrm{arctan}$-function of two variables and
 \[
 \mathrm{atan2}\,'_u=\frac{-v}{u^2+v^2}\ \text{and}\ \mathrm{atan2}\,'_v=\frac{u}{u^2+v^2}.
 \]
Therefore,
 \[
 \Theta '_r=\frac{-vu'_r}{u^2+v^2}+\frac{uv'_r}{u^2+v^2}=\frac{uv'_r-vu'_r}{R^2}.
 \] 
Similarly,
 \[
 \Theta '_\theta =\frac{-vu'_\theta }{u^2+v^2}+\frac{uv'_\theta }{u^2+v^2}=\frac{uv'_\theta -vu'_\theta }{R^2}.
 \] 
Thus using Cauchy--Riemann conditions in polar form, we obtain
 \[
 rR'_r=\frac{r(uu'_r+vv'_r)}{R}=\frac{uv'_\theta -vu'_\theta }{R}=R\Theta '_\theta \ \Rightarrow \ r(\ln R)'_r=\Theta '_\theta 
 \]
and
 \[
 R'_\theta =\frac{uu'_\theta +vv'_\theta }{R}=\frac{-r(uv'_r -vu'_r)}{R} =-rR\Theta '_r \ \Rightarrow \ (\ln R)'_\theta =-r\Theta '_r ,
 \] 
proving part (a).

To prove part (b), we start from the system of equations
 \[
 \left\{ \begin{array}{l} R'_rR=uu'_r+vv'_r,\\
                                     \Theta '_rR^2=uv'_r-vu'_r. \end{array}\right. 
 \]  
In the matrix form
 \[
 \left[ \!\!\begin{array}{c} R'_rR \\ \Theta '_rR^2 \end{array}\!\!\right] =
 \left[ \!\!\begin{array}{cc} u & v \\ -v & u \end{array}\!\!\right]
 \left[ \!\!\begin{array}{c} u'_r \\ v'_r \end{array}\!\!\right] ,
 \] 
which implies
  \[
 \left[ \!\!\begin{array}{c} u'_r \\ v'_r \end{array}\!\!\right]=
 \frac{1}{R^2}\left[ \!\!\begin{array}{cc} u & -v \\ v & u \end{array}\!\!\right]
 \left[ \!\!\begin{array}{c} R'_rR \\ \Theta '_rR^2 \end{array}\!\!\right] .
 \]  
Therefore,
 \[
 u'_r=\frac{1}{R}uR'_r-v\Theta'_r,\ v'_r=\frac{1}{R}vR'_r+u\Theta '_r .
 \] 
Thus
 \begin{align*}
 f'(z) 
   & =e^{-i\theta }(u'_r+iv'_r)=e^{-i\theta }\bigg( \frac{R'_r(u+iv)}{R}+i\Theta '_r(u+iv)\bigg) \\
   & =e^{-i\theta }(u+iv)\bigg( \frac{R'_r}{R}+i\Theta '_r\bigg) =e^{-i\theta }Re^{i\Theta }\bigg( \frac{R'_r}{R}+i\Theta '_r\bigg) \\
   & =e^{i(\Theta -\theta )}(R'_r+iR\Theta '_r),
 \end{align*}  
proving part (b).

Finally,
 \begin{align*}
 (\log f(z))' 
    & =\frac{f'(z)}{f(z)}=\frac{e^{i(\Theta -\theta )}(R'_r+iR\Theta '_r)}{Re^{i\Theta }}\\
    & =e^{-i\theta }((\ln R)'_r+i\Theta '_r)=(\cos \theta -i\sin \theta )((\ln R)'_r+i\Theta '_r) \\
    & =((\ln R)'_r\cos \theta +\Theta '_r\sin \theta )+i(\Theta '_r\cos \theta -(\ln R)'_r\sin \theta ) ,
 \end{align*} 
proving part (c).
 \end{proof}

Now let $\mathbf{f}:B\subseteq \mathbb{B}\to \mathbb{B}$ be given. The argument of this function will be denoted by $(r,\theta )\in B\subseteq \mathbb{B}$ and the value by $(R,\Theta )\in \mathbb{B}$. Therefore, we can present this function as $\mathbf{f}(r,\theta )=(R(r,\theta ),\Theta (r,\theta ))$. For example, for the power function $\mathbf{f}=\mathbf{z}^w$, $\mathbf{z}\in \mathbb{B}$, where $w=u+iv\in \mathbb{C}$ is fixed, the functions $R$ and $\Theta $ were derived previously in the form
 \[
 R(r,\theta )=e^{u\ln r-v\theta }\ \text{and}\ \Theta (r,\theta )=u\theta +v\ln r.
 \] 
Motivated from *derivative in the real and complex cases, we will put the formula 
 \[
  \mathbf{f}^*(\mathbf{z})=\mathbf{e}^{(\mathbf{log}\,(\mathbf{f}(\mathbf{z}))'}
 \]
on the basis of *derivative of $\mathbf{f}$. Then by Lemma \ref{L1}(c) we can set the following.  
 \begin{definition}
 \label{D1}
{\rm A function $\mathbf{f}(r,\theta )=(R(r,\theta ),\Theta (r,\theta ))$ is said to be *differentiable at $(r,\theta )$ if $R$ and $\Theta $ have continuous partial derivatives and the Cauchy--Riemann conditions in (a) hold at $(r,\theta )$. If $\mathbf{f}$ is *differentiable, then its *derivative is defined by
 \[
 \mathbf{f}^*(\mathbf{z})=\big( e^{(\ln R)'_r\cos \theta +\Theta '_r\sin \theta },\Theta '_r\cos \theta -(\ln R)'_r\sin \theta \big) .
 \]
$\mathbf{f}$ is said to be *analytic on $B\subseteq \mathbb{B}$ if $\mathbf{f}$ is *differentiable at every $(r,\theta )\in B$.  }
 \end{definition}
 \begin{example}
 \label{E1}
{\rm The ordinary derivative of a constant function is equal to the neutral element of addition, that is zero. In real and complex *calculus the *derivative of a constant function is equal to the neutral element of multiplication, that is one. Let us determine whether the *derivative of $\mathbf{f}(\mathbf{z})=\mathbf{z}_0$ is equal to the neutral element $(1,0)$ of multiplication in $\mathbb{B}$ or not. Letting $\mathbf{z}=(r,\theta )$ and $\mathbf{z}_0=(r_0,\theta _0)$, we obtain $R(r,\theta )=r_0$ and $\Theta (r,\theta )=\theta _0$. The Cauchy--Riemann conditions (a) hold in the form
 \[
 r(\ln R)'_r=\Theta '_\theta =(\ln R)'_\theta =-r\Theta '_r=0. 
 \]
Therefore, $\mathbf{f}^*(z)$ exists. Then by Definition \ref{D1}, $\mathbf{f}^*(z)=(e^0,0)=(1,0)$.}
 \end{example} 
 \begin{example}
 \label{E2}
{\rm In real and complex *calculus the exponential function plays the role of the linear function and, therefore, $(e^z)^*=e$ (a constant that in real case shows that the value of this function instantaneously changes $e$ times while $(e^z)'=e^z$ shows that the value changes for $e^z$ units). For $\mathbf{e}^z$, we have
 \[
 R(r,\theta )=e^{r\cos \theta }\ \text{and}\ \Theta (r,\theta )=r\sin \theta .
 \]
The Cauchy--Riemann conditions hold in the form
 \[
r(\ln R)'_r=\Theta '_\theta =r\cos \theta     \ \text{and}\ (\ln R)'_\theta =-r\Theta '_r=-r\sin \theta . 
 \] 
We have
 \[
 (\ln R)'_r\cos \theta +\Theta '_r\sin \theta =\cos ^2\theta +\sin ^2\theta =1
 \]
and
 \[
 \Theta '_r\cos \theta -(\ln R)'_r\sin \theta =\sin \theta \cos \theta -\cos \theta \sin \theta =0,
 \]
implying $(\mathbf{e}^z)^*=(e,0)$.
}
 \end{example}
 \begin{example}
 \label{E3}
{\rm The *derivative of the identity function $f(z)=z$, $z\in \mathbb{C}\setminus \{ 0\} $, was calculated in \cite{B5} in the form $(z)^*=e^{1/z}$. Let us calculate *derivative of $\mathbf{f}(\mathbf{z})=\mathbf{z}$, $\mathbf{z}\in \mathbb{B}$. For this function, we have
 \[
 R(r,\theta )=r\ \text{and}\ \Theta (r,\theta )=\theta .
 \] 
Then 
 \[
r(\ln R)'_r=\Theta '_\theta =1 \ \text{and}\ (\ln R)'_\theta =-r\Theta '_r=0. 
 \] 
We have
 \[
(\ln R)'_r\cos \theta +\Theta '_r\sin \theta =\frac{\cos \theta }{r} 
 \] 
and 
 \[
  \Theta '_r\cos \theta -(\ln R)'_r\sin \theta=-\frac{\sin \theta }{r}.
 \] 
Therefore,
 \[
 (\mathbf{z})^*=
 \bigg( e^{\frac{1}{r}\cos \theta },-\frac{1}{r}\sin \theta \bigg) =\mathbf{e}^{\frac{1}{r}(\cos \theta -i\sin \theta )}=\mathbf{e}^{\frac{1}{r(\cos \theta +i\sin \theta )}}=\mathbf{e}^{\frac{1}{\mathbf{l}(\mathbf{z})}}.
 \]
One can see that $(\mathbf{z})^*$ is $\mathbb{C}$-valued if $-\pi \le -\frac{1}{r}\sin \theta <\pi $ or, equivalently, 
 \[
 -r\pi <\sin \theta \le r\pi .
 \]
}
 \end{example}  
 \begin{example}
 \label{E4}
{\rm The *derivative of $\log z$, $z\in \mathbb{C}\setminus \{ 0\} $, was calculated in \cite{B5} in the form $(\log z)^*=e^{1/z\log z}$ as a multi-valued function. Let us calculate *derivative of $\mathbf{log}\, \mathbf{z}$, $\mathbf{z}\in \mathbb{B}$. For this function, we have
 \[
 R(r,\theta )=\sqrt{\ln ^2r+\theta ^2}\ \text{and}\ \Theta (r,\theta )=\mathrm{atan2}\,(\theta ,\ln r).
 \] 
Then
 \[
r(\ln R)'_r=\Theta '_\theta =\frac{\ln r}{\ln ^2r+\theta ^2} \ \text{and}\ (\ln R)'_\theta =-r\Theta '_r=\frac{\theta }{\ln ^2r+\theta ^2}. 
 \] 
We have
 \[
(\ln R)'_r\cos \theta +\Theta '_r\sin \theta =\frac{\ln r\cos \theta -\theta \sin \theta }{r(\ln ^2r+\theta ^2)}
 \] 
and 
 \[
  \Theta '_r\cos \theta -(\ln R)'_r\sin \theta =-\frac{\theta \cos \theta +\ln r\sin \theta }{r(\ln ^2r+\theta ^2)}.
 \]
Therefore,
 \begin{align*}
 (\log \mathbf{z})^*
  & =\bigg( e^{\frac{\ln r\cos \theta -\theta \sin \theta }{r(\ln ^2r+\theta ^2)}},-\frac{\theta \cos \theta +\ln r\sin \theta }{r(\ln ^2r+\theta ^2)}\bigg) \\
  & \mathbf{e}^{\frac{\ln r\cos \theta -\theta \sin \theta -i(\ln r\sin \theta +\theta \cos \theta )}{r(\ln ^2r+\theta ^2)}}=\mathbf{e}^{\frac{\ln r-i\theta }{\ln ^2r+\theta ^2}\cdot \frac{\cos \theta -i\sin \theta }{r}} \\
  & =\mathbf{e}^{\frac{\overline{\mathbf{log }\,\mathbf{z}}}{|\mathbf{log }\,\mathbf{z}|}\cdot\frac{\overline{\mathbf{l}(\mathbf{z})}}{|\mathbf{l}(\mathbf{z})|}}=\mathbf{e}^{\frac{1}{\mathbf{l}(\mathbf{z})\mathbf{log }\,\mathbf{z}}}.
 \end{align*} 
By Cauchy--Schwarz inequality
 \[
 -\frac{1}{r\sqrt{\ln ^2r+\theta ^2}}\le -\frac{\theta \cos \theta +\ln r\sin \theta }{r(\ln ^2r+\theta ^2)}\le \frac{1}{r\sqrt{\ln ^2r+\theta ^2}}
 \] 
Therefore, for $r\pi \sqrt{\ln ^2r+\theta ^2}>1$, $(\log \mathbf{z})^*$ is $\mathbb{C}$-valued. }
 \end{example} 
 \begin{example}
 \label{E5}
{\rm The *derivative of the real power function $x^a$, $x>0$, where $a\in \mathbb{R}$ is fixed, equals to $(x^a)^*=e^{\frac{a}{x}}$. Let us calculate *derivative of $\mathbf{z}^w$, $\mathbf{z}\in \mathbb{B}$, assuming that $w=u+iv\in \mathbb{C}$ is fixed. For this function, we have
 \[
 R(r,\theta )=e^{u\ln r-v\theta }\ \text{and}\ \Theta (r,\theta )=u\theta +v\ln r.
 \] 
The Cauchy--Riemann conditions hold in the form 
 \[
r(\ln R)'_r=\Theta '_\theta =u \ \text{and}\ (\ln R)'_\theta =-r\Theta '_r=-v. 
 \] 
We have
 \[
(\ln R)'_r\cos \theta +\Theta '_r\sin \theta =\frac{u\cos \theta +v\sin \theta }{r}
 \] 
and 
 \[
  \Theta '_r\cos \theta -(\ln R)'_r\sin \theta =\frac{v\cos \theta -u\sin \theta }{r}.
 \] 
Therefore,
 \begin{align*}
 (\mathbf{z}^w)^*
  & =(e^{(u\cos \theta +v\sin \theta )/r},(v\cos \theta -u\sin \theta )/r)\\
  & =\mathbf{e}^{\frac{1}{r}(u\cos \theta +v\sin \theta )+\frac{i}{r}(v\cos \theta -u\sin \theta )}.
 \end{align*}
Since
 \begin{align*}
 \frac{w}{\mathbf{l}(\mathbf{z})}
  & =\frac{u+iv}{r(\cos \theta +i\sin\theta )}=\frac{(u+iv)(\cos \theta -i\sin\theta )}{r}\\
  & =\frac{u\cos \theta +v\sin \theta }{r}+i\frac{v\cos \theta -u\sin \theta }{r},
 \end{align*}
we obtain
 \[
 (\mathbf{z}^w)^*=\mathbf{e}^\frac{w}{\mathbf{l}(\mathbf{z})}.
 \] 
By Cauchy--Schwarz inequality,
 \[
 -|w|=-\sqrt{u^2+v^2}\le v\cos \theta -u\sin \theta \le \sqrt{u^2+v^2}=|w|
 \]
Therefore, for $|w|<r\pi $, $(\mathbf{z}^w)^*$ is $\mathbb{C}$-valued. }
 \end{example}          
The following properties of *derivative can be easily verified:
 \begin{description}
 \item [\rm (i)] $(\mathbf{z}_0\mathbf{f})^*(\mathbf{z})=\mathbf{f}^*(\mathbf{z})$ for constant $\mathbf{z}_0\in \mathbb{B}$.
 \item [\rm (ii)] $(\mathbf{f}\mathbf{g})^*(\mathbf{z})=\mathbf{f}^*(\mathbf{z})\mathbf{g}^*(\mathbf{z})$.
 \item [\rm (iii)] $(\mathbf{f}/\mathbf{g})^*(\mathbf{z})=\mathbf{f}^*(\mathbf{z})/\mathbf{g}^*(\mathbf{z})$.
 \end{description}
For example, the proof of (ii) is as follows. Letting 
 \[
 \mathbf{z}=(r,\theta ),\ \mathbf{f}(\mathbf{z})=(R(r,\theta ),\Theta (r,\theta )),\ \mathbf{g}(\mathbf{z})=(\tilde{R}(r,\theta ),\tilde{\Theta }(r,\theta )),
 \]
we have 
 \[
 (\mathbf{f}\mathbf{g})(\mathbf{z})=(R(r,\theta )\tilde{R}(r,\theta ),\Theta (r,\theta )+\tilde{\Theta }(r,\theta )).
 \]
Therefore, 
 \begin{align*}
 (\mathbf{f}\mathbf{g})^*(\mathbf{z})=
  & \big( e^{(\ln (R\tilde{R}))'_r\cos \theta +(\Theta +\tilde{\Theta })'_r\sin \theta },((\Theta +\tilde{\Theta })'_r\cos \theta -\ln (R\tilde{R}))'_r\sin \theta \big) \\
  =& \big( e^{(\ln R)'_r\cos \theta +\Theta '_r\sin \theta },\Theta '_r\cos \theta -(\ln R)'_r\sin \theta \big) \\
  & \times \big( e^{(\ln \tilde{R})'_r\cos \theta +\tilde{\Theta }'_r\sin \theta },\tilde{\Theta }'_r\cos \theta -(\ln \tilde{R})'_r\sin \theta \big) \\
  = & \mathbf{f}^*(\mathbf{z})\mathbf{g}^*(\mathbf{z}).
 \end{align*} 
The other properties can be proved similarly. Moreover, remaining theorems of complex differentiation can be stated and proved in terms of *derivative. More importantly, we pass to *integral in the next section.


\section{*Integral of functions over $\mathbb{B}$}

\begin{definition}
\label{D2}
{\rm Given $\mathbf{f}:B\subseteq \mathbb{B}\to \mathbb{B}$ by $\mathbf{f}(\mathbf{z})=(R(r,\theta ),\Theta (r,\theta ))$ for $\mathbf{z}=(r,\theta )$, we assume that $C$ is a contour in $B$ with a parameterization $\mathbf{z}(t)=(r(t),\theta (t))$, $a\le t\le b$. Then we define the *integral of $\mathbf{f}$ along the curve $C$ by
 \begin{equation}
 \label{7}
 \int _C\mathbf{f}(\mathbf{z})^{d\mathbf{z}}=\mathbf{e}^{\int _CP\,dr-Q\,d\theta +i\int _CM\,dr+N\,d\theta }.
 \end{equation}
assuming that the ordinary line integrals in the right side exist and
 \begin{align*}
 P(r,\theta ) &=\ln R\cos\theta -\Theta \sin \theta ,\\
 Q(r,\theta ) &=r\ln R\sin \theta +r\Theta \cos \theta ,\\
 M(r,\theta ) &=\ln R\sin\theta +\Theta \cos \theta ,\\
 N(r,\theta ) &=r\ln R\cos \theta -r\Theta \sin \theta .
 \end{align*}
}
 \end{definition}  
  
This integral has the following obvious properties:
 \begin{description}
 \item [\rm (i)] $\int _C\mathbf{f}(\mathbf{z})^{d\mathbf{z}}=\int _{C_1}\mathbf{f}(\mathbf{z})^{d\mathbf{z}}\int _{C_2}\mathbf{f}(\mathbf{z})^{d\mathbf{z}}$, where $C=C_1+C_2$.
 \item [\rm (ii)] $\int _C\mathbf{f}(\mathbf{z})\mathbf{g}(\mathbf{z})^{d\mathbf{z}}=\int _C\mathbf{f}(\mathbf{z})^{d\mathbf{z}}\int _C\mathbf{g}(\mathbf{z})^{d\mathbf{z}} $.
 \item [\rm (iii)] $\int _C\mathbf{f}(\mathbf{z})/\mathbf{g}(\mathbf{z})^{d\mathbf{z}}=\int _C\mathbf{f}(\mathbf{z})^{d\mathbf{z}}\big/ \int _C\mathbf{g}(\mathbf{z})^{d\mathbf{z}} $.
  \end{description} 
For example, property (ii) can be proved as follows. Let $P$, $Q$, $M$, and $N$ be the preceding functions associated with $\mathbf{f}$ and denote the respective functions associated with $\mathbf{g}$ by $\tilde{P}$, $\tilde{Q}$, $\tilde{M}$, and $\tilde{N}$. Then the respective functions associated with $\mathbf{f}\mathbf{g}$ are
 \begin{align*}
 \mathcal{P}(r,\theta ) &=\ln (R\tilde{R})\cos\theta -(\Theta +\tilde{\Theta })\sin \theta ,\\
 \mathcal{Q}(r,\theta ) &=r\ln (R\tilde{R})\sin \theta +r(\Theta +\tilde{\Theta })\cos \theta ,\\
 \mathcal{M}(r,\theta ) &=\ln (R\tilde{R})\sin\theta +(\Theta +\tilde{\Theta })\cos \theta ,\\
 \mathcal{N}(r,\theta ) &=r\ln (R\tilde{R})\cos \theta -r(\Theta +\tilde{\Theta })\sin \theta .
 \end{align*} 
Therefore,  
 \begin{align*}
 \int _C\mathbf{f}(\mathbf{z})\mathbf{g}(\mathbf{z})^{d\mathbf{z}}
  & =\mathbf{e}^{\int _C\mathcal{P}\,dr-\mathcal{Q}\,d\theta +i\int _C\mathcal{M}\,dr+\mathcal{N}\,d\theta } \\
  & =\mathbf{e}^{\int _CPdr-Q\,d\theta +i\int _CM\,dr+N\,d\theta }\mathbf{e}^{\int _C\tilde{P}\,dr-\tilde{Q}\,d\theta +i\int _C\tilde{M}\,dr+\tilde{N}\,d\theta }\\
  & =\int _C\mathbf{f}(\mathbf{z})^{d\mathbf{z}}\int _C\mathbf{g}(\mathbf{z})^{d\mathbf{z}}.
  \end{align*}
  
We are mainly interested in whether *integral inverts *differentiation or not.   
 \begin{theorem} [Fundamental theorem of *calculus]
 \label{T1} 
Let $\mathbf{f}$ be *analytic on a connected set $B\subseteq \mathbb{B}$ and let $C$ be a contour in $B$ with the initial and end points $\mathbf{z}_1$ and $\mathbf{z}_2$, respectively. Then
  \[
  \int _C\mathbf{f}^*(\mathbf{z})^{d\mathbf{z}}=\frac{\mathbf{f}(\mathbf{z}_2)}{\mathbf{f}(\mathbf{z}_1)}.
  \] 
 \end{theorem}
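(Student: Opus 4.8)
The plan is to reduce the statement to the classical fundamental theorem for line integrals by pushing everything through the bijection $\mathbf{log}:\mathbb{B}\to\mathbb{C}$. First I would compute, for a $*$-analytic $\mathbf{f}(r,\theta)=(R(r,\theta),\Theta(r,\theta))$, what the integrand $\mathbf{f}^*(\mathbf{z})^{d\mathbf{z}}$ contributes along the contour. By Definition \ref{D1}, $\mathbf{f}^*(\mathbf{z})=(R^*,\Theta^*)$ with $\ln R^*=(\ln R)'_r\cos\theta+\Theta'_r\sin\theta$ and $\Theta^*=\Theta'_r\cos\theta-(\ln R)'_r\sin\theta$. Feeding this $\mathbf{f}^*$ into Definition \ref{D2} produces $P^*,Q^*,M^*,N^*$ built from $\ln R^*$ and $\Theta^*$; the goal is to recognize the $1$-forms $P^*\,dr-Q^*\,d\theta$ and $M^*\,dr+N^*\,d\theta$ as \emph{exact}, namely as $d(\ln R\cos\theta-\Theta\sin\theta)$ and $d(\ln R\sin\theta+\Theta\cos\theta)$ respectively — i.e.\ the real and imaginary parts of $d(\mathbf{log}\,\mathbf{f}(\mathbf{z}))$ expressed in the coordinates $(r,\theta)$.

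The cleanest way to see this is to introduce $g=\mathbf{log}\circ\mathbf{f}\circ\mathbf{e}^{(\cdot)}$, the $\mathbb{C}$-to-$\mathbb{C}$ companion of $\mathbf{f}$ from Section 6, and note that $*$-analyticity of $\mathbf{f}$ (the Cauchy--Riemann conditions of Lemma \ref{L1}(a) on $R,\Theta$) is exactly ordinary analyticity of $g$; Lemma \ref{L1}(c) then identifies the exponent data of $\mathbf{f}^*$ with $(\log$-type derivative$)$ of the relevant composite. Concretely, I would verify by direct differentiation that
\[
\frac{\partial}{\partial\theta}\big(P^*\big)=-\frac{\partial}{\partial r}\big(Q^*\big)
\quad\text{and}\quad
\frac{\partial}{\partial\theta}\big(M^*\big)=\frac{\partial}{\partial r}\big(N^*\big),
\]
using the Cauchy--Riemann relations $r(\ln R)'_r=\Theta'_\theta$, $(\ln R)'_\theta=-r\Theta'_r$ together with continuity of the second partials (so mixed partials commute). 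This exactness means the two line integrals in \eqref{7} depend only on the endpoints, and evaluating the potentials at $\mathbf{z}_2=(r_2,\theta_2)$ and $\mathbf{z}_1=(r_1,\theta_1)$ gives
\[
\int_C P^*\,dr-Q^*\,d\theta = \big(\ln R\cos\theta-\Theta\sin\theta\big)\Big|_{\mathbf{z}_1}^{\mathbf{z}_2},
\qquad
\int_C M^*\,dr+N^*\,d\theta = \big(\ln R\sin\theta+\Theta\cos\theta\big)\Big|_{\mathbf{z}_1}^{\mathbf{z}_2}.
\]

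Finally I would reassemble: the right-hand exponent in \eqref{7} becomes $\big[\mathbf{log}\,\mathbf{f}(\mathbf{z}_2)\big]-\big[\mathbf{log}\,\mathbf{f}(\mathbf{z}_1)\big]$ once one checks that $\ln R\cos\theta-\Theta\sin\theta+i(\ln R\sin\theta+\Theta\cos\theta)=(\ln R+i\Theta)e^{i\theta}$ matches the definition of the integrand's antiderivative correctly — this bookkeeping with the $e^{i\theta}$ twist (which is the $\mathbf{l}$-factor appearing throughout the $*$-derivative examples) is where I expect the main friction, since the coordinates on the \emph{domain} side of $\mathbb{B}$ and the \emph{value} side interact. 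Once the exponent is $\mathbf{log}\,\mathbf{f}(\mathbf{z}_2)-\mathbf{log}\,\mathbf{f}(\mathbf{z}_1)$, applying $\mathbf{e}^{(\cdot)}$ and using $\mathbf{e}^{w_1-w_2}=\mathbf{e}^{w_1}/\mathbf{e}^{w_2}$ together with $\mathbf{e}^{\mathbf{log}\,\mathbf{z}}=\mathbf{z}$ (Section 4) yields $\mathbf{f}(\mathbf{z}_2)/\mathbf{f}(\mathbf{z}_1)$, as claimed. The hypothesis that $B$ is connected is used to guarantee a contour between any two points; no simple-connectedness is needed because the forms are globally exact on the image, not merely closed.
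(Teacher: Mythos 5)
Your overall strategy is the paper's: feed $\mathbf{f}^*$ into Definition \ref{D2}, use the Cauchy--Riemann conditions of Lemma \ref{L1}(a) to recognize the two $1$-forms in the exponent of (\ref{7}) as exact, and evaluate potentials at the endpoints. But the step you defer --- actually computing $P^*,Q^*,M^*,N^*$ --- is the whole proof, and the potentials you name are wrong. Writing $\ln R^*=(\ln R)'_r\cos\theta+\Theta'_r\sin\theta$ and $\Theta^*=\Theta'_r\cos\theta-(\ln R)'_r\sin\theta$ and substituting into Definition \ref{D2}, the trigonometric factors cancel completely:
\[
P^*=(\ln R)'_r,\qquad Q^*=r\Theta'_r,\qquad M^*=\Theta'_r,\qquad N^*=r(\ln R)'_r,
\]
so that by the Cauchy--Riemann relations $(\ln R)'_\theta=-r\Theta'_r$ and $\Theta'_\theta=r(\ln R)'_r$ one gets $P^*\,dr-Q^*\,d\theta=d(\ln R)$ and $M^*\,dr+N^*\,d\theta=d\Theta$. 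The potentials are simply $\ln R$ and $\Theta$, i.e.\ the real and imaginary parts of $\mathbf{log}\,\mathbf{f}(\mathbf{z})$ itself --- not $\ln R\cos\theta-\Theta\sin\theta$ and $\ln R\sin\theta+\Theta\cos\theta$, which are the real and imaginary parts of $(\ln R+i\Theta)e^{i\theta}$. You can check directly that $d(\ln R\cos\theta-\Theta\sin\theta)$ has $dr$-component $(\ln R)'_r\cos\theta-\Theta'_r\sin\theta\neq P^*$ in general; and if your potentials were correct, the endpoint evaluation would produce $\bigl[(\ln R+i\Theta)e^{i\theta}\bigr]_{\mathbf{z}_1}^{\mathbf{z}_2}$, which does not equal $\mathbf{log}\,\mathbf{f}(\mathbf{z}_2)-\mathbf{log}\,\mathbf{f}(\mathbf{z}_1)$ and hence would not exponentiate to $\mathbf{f}(\mathbf{z}_2)/\mathbf{f}(\mathbf{z}_1)$. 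The ``$e^{i\theta}$ twist'' you flag as the main friction is not a bookkeeping nuisance to be reconciled later: it is the symptom that the claimed antiderivatives are incorrect, and with the correct cancellation there is no twist at all.

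Two smaller points. First, your plan to verify only the closedness relations $\partial_\theta P^*=-\partial_r Q^*$ and $\partial_\theta M^*=\partial_r N^*$ would not suffice on a non--simply-connected $B$; you do assert global exactness afterwards, but with the wrong potentials, so as written neither half of the argument is secured. The paper avoids this entirely because the cancellation above exhibits exact forms outright. Second, the detour through $g=\mathbf{log}\circ\mathbf{f}\circ\mathbf{e}^{(\cdot)}$ is a reasonable heuristic but is not needed: once $P^*,Q^*,M^*,N^*$ are computed, the proof is three lines of substitution plus $\mathbf{e}^{w_1-w_2}=\mathbf{e}^{w_1}/\mathbf{e}^{w_2}$ and $\mathbf{e}^{\mathbf{log}\,\mathbf{z}}=\mathbf{z}$, exactly as you conclude.
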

 \begin{proof}
Let $C$ be parameterized by $\mathbf{z}(t)=(r(t),\theta (t))$, $a\le t\le b$, and let $\mathbf{f}(\mathbf{z})=(R(r,\theta ),\Theta (r,\theta ))$. Then
 \[
 \mathbf{f}^*(\mathbf{z})=\mathbf{e}^{(\ln R)'_r\cos \theta +\Theta '_r\sin \theta +i(\Theta '_r\cos \theta -(\ln R)'_r\sin \theta )} .
 \] 
We can calculate the functions $P$, $Q$, $M$, and $N$, associated with $\mathbf{f}$, in the form: 
 \begin{align*}
 P & =((\ln R)'_r\cos \theta +\Theta '_r\sin \theta )\cos \theta -(\Theta '_r\cos \theta -(\ln R)'_r\sin \theta )\sin \theta =(\ln R)'_r,\\ 
 Q & =r((\ln R)'_r\cos \theta +\Theta '_r\sin \theta )\sin \theta +r(\Theta '_r\cos \theta -(\ln R)'_r\sin \theta )\cos \theta =r\Theta '_r,\\
 M & =((\ln R)'_r\cos \theta +\Theta '_r\sin \theta )\sin \theta +(\Theta '_r\cos \theta -(\ln R)'_r\sin \theta )\cos \theta =\Theta '_r,\\
 N & =r((\ln R)'_r\cos \theta +\Theta '_r\sin \theta )\cos \theta -r(\Theta '_r\cos \theta -(\ln R)'_r\sin \theta )\sin \theta =r(\ln R)'_r.        
 \end{align*}
Therefore,
 \[
\int _C\mathbf{f}^*(\mathbf{z})^{d\mathbf{z}}=\mathbf{e}^{\int _C(\ln R)'_r\,dr-r\Theta '_r\,d\theta +i\int _C\Theta '_r\,dr+r(\ln R)'_r\,d\theta }.
 \]
By Cauchy--Riemann conditions,
 \begin{align*}
\int _C\mathbf{f}^*(\mathbf{z})^{d\mathbf{z}}
 & =\mathbf{e}^{\int _C(\ln R)'_r\,dr+(\ln R)'_\theta \,d\theta +i\int _C\Theta '_r\,dr+\Theta '_\theta \,d\theta } \\
 & =\mathbf{e}^{\ln R(r(b),\theta (b))-\ln R(r(a),\theta (a))+i(\Theta (r(b),\theta (b))-\Theta (r(a),\theta (a)))}\\  
 & =\frac{\mathbf{e}^{\ln R(r(b),\theta (b))+i\Theta (r(b),\theta (b)))}}{\mathbf{e}^{\ln R(r(a),\theta (a))+i\Theta (r(a),\theta (a)))}}\\
 & =\frac{(R(r(b),\theta (b)),\Theta (r(b),\theta (b))}{(R(r(a),\theta (a)),\Theta (r(a),\theta (a))}=\frac{\mathbf{f}(\mathbf{z}_2)}{\mathbf{f}(\mathbf{z}_1)},
 \end{align*}   
proving the theorem.
 \end{proof}
 \begin{remark}
 \label{R1}
{\rm Let $C_1$ and $C_2$ be two contours in the connected set $B\subseteq \mathbb{B}$ with the same initial and end points $\mathbf{z}_1$ and $\mathbf{z}_2$, respectively, and let $\mathbf{f}$ be *analytic on $B$. Then by Theorem \ref{T1},
 \[
 \int _{C_1}\mathbf{f}(\mathbf{z})^{d\mathbf{z}}=\int _{C_2}\mathbf{f}(\mathbf{z})^{d\mathbf{z}},
 \]
that means the *integral is independent on the shape of the contours. Therefore, this integral can be denoted as 
 \[
 \int _{\mathbf{z}_1}^{\mathbf{z}_2}\mathbf{f}(\mathbf{z})^{d\mathbf{z}}.
 \] 
 }
 \end{remark}
 \begin{theorem}
 \label{T2}
Let $\mathbf{f}$ be *analytic on a connected set $B\subseteq \mathbb{B}$ and let $C$ be a closed contour in $B$. Then
  \[
  \int _C\mathbf{f}^*(\mathbf{z})^{d\mathbf{z}}=(1,0)=\mathbf{e}^0.
  \]  
 \end{theorem}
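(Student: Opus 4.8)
The plan is to deduce this as an immediate corollary of Theorem~\ref{T1}. By definition a closed contour $C$ is one whose initial point $\mathbf{z}_1$ and end point $\mathbf{z}_2$ coincide; write $\mathbf{z}_1=\mathbf{z}_2$. The fundamental theorem of *calculus then gives
\[
\int_C \mathbf{f}^*(\mathbf{z})^{d\mathbf{z}}=\frac{\mathbf{f}(\mathbf{z}_2)}{\mathbf{f}(\mathbf{z}_1)}=\frac{\mathbf{f}(\mathbf{z}_1)}{\mathbf{f}(\mathbf{z}_1)},
\]
and by the division rule (\ref{3}), for any $\mathbf{w}=(R_0,\Theta_0)\in\mathbb{B}$ one has $\mathbf{w}/\mathbf{w}=(R_0/R_0,\Theta_0-\Theta_0)=(1,0)$. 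Since $(1,0)=(e^0,0)=\mathbf{e}^0$ by the definition (\ref{5}) of the $\mathbf{e}$-function, the claim follows.

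Alternatively, and essentially equivalently, one can re-run the computation inside the proof of Theorem~\ref{T1}: writing $\mathbf{f}(\mathbf{z})=(R(r,\theta),\Theta(r,\theta))$, the quantity $\int_C\mathbf{f}^*(\mathbf{z})^{d\mathbf{z}}$ equals $\mathbf{e}$ raised to the complex number $\int_C d(\ln R)+i\int_C d\Theta$, where the Cauchy--Riemann conditions (a) of Lemma~\ref{L1} are what let us recognise the two real line integrals as integrals of the exact differentials $d(\ln R)=(\ln R)'_r\,dr+(\ln R)'_\theta\,d\theta$ and $d\Theta=\Theta'_r\,dr+\Theta'_\theta\,d\theta$. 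Because $C$ is closed and $\ln R$ and $\Theta$ are single-valued functions on $B$, both closed line integrals vanish, so the exponent is $0$ and the *integral is $\mathbf{e}^0=(1,0)$.

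There is no genuine obstacle here; the statement is a one-line consequence of Theorem~\ref{T1}. The only point I would be careful to flag is conceptual: unlike classical complex analysis, where $\oint_C g(z)\,dz$ need not vanish for $g$ analytic on a non-simply-connected domain (take $g(z)=1/z$), the present result requires only that $B\subseteq\mathbb{B}$ be connected, with no simple-connectedness hypothesis. This is precisely because a *analytic $\mathbf{f}$ takes values in the Riemann surface $\mathbb{B}$, so the angular component $\Theta$ of $\mathbf{f}(\mathbf{z})$ is an honest real number rather than an equivalence class modulo $2\pi$; the potential data $\ln R$ and $\Theta$ are therefore globally well defined on $B$, and $C$ being merely closed (not necessarily null-homotopic) already forces the integral to be trivial. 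In the write-up I would make this remark explicit, since it is exactly where the reorientation onto $\mathbb{B}$ pays off.
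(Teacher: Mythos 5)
Your argument is correct, but it is not the paper's. The paper proves Theorem~\ref{T2} independently of Theorem~\ref{T1}, in the style of the Cauchy--Goursat theorem: it forms the functions $P,Q,M,N$ of Definition~\ref{D2}, checks the closedness relations $Q'_r+P'_\theta=0$ and $N'_r-M'_\theta=0$, and applies Green's theorem to annihilate both line integrals in (\ref{7}). You instead read the statement as the special case $\mathbf{z}_1=\mathbf{z}_2$ of Theorem~\ref{T1}, so that the *integral equals $\mathbf{f}(\mathbf{z}_1)/\mathbf{f}(\mathbf{z}_1)=(1,0)=\mathbf{e}^0$ by the division rule (\ref{3}); your second paragraph merely re-runs the proof of Theorem~\ref{T1} for a closed $C$. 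Since the integrand here is by hypothesis the *derivative of a globally defined $\mathbf{f}$, your route is both shorter and slightly more robust: Green's theorem tacitly requires $C$ to bound a region contained in $B$ (i.e.\ to be null-homotopic there), a hypothesis absent from the statement, whereas the exact-differential argument needs only that $\ln R$ and $\Theta$ be single-valued $C^1$ potentials along $C$ --- which is exactly what *analyticity on the connected set $B$ provides. (Incidentally, the paper's displayed computation of $Q'_r+P'_\theta$ differentiates only the explicit occurrences of $r$ and $\theta$; the omitted terms involving $(\ln R)'_r$, $(\ln R)'_\theta$, $\Theta'_r$, $\Theta'_\theta$ cancel only after invoking the Cauchy--Riemann conditions of Lemma~\ref{L1}(a), so your bookkeeping with $d(\ln R)$ and $d\Theta$ is the cleaner way to see that the relevant forms are closed.) Your concluding remark on why no simple-connectedness is needed --- that $\Theta$ is an honest real-valued function on $\mathbb{B}$ rather than a class mod $2\pi$ --- is accurate and is precisely the moral of Examples~\ref{E6} and \ref{E7}, where the lift to $\mathbb{B}$ of the circle $|z|=a$ fails to be a closed contour.
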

 \begin{proof}
Under notation in Definition \ref{D2}, we have 
 \[
 Q'_r+P'_\theta =\ln R\sin \theta +\Theta \cos \theta -\ln R\sin \theta -\Theta \cos \theta =0
 \]
and
 \[
 N'_r-M'_\theta =\ln R\cos \theta +\Theta \sin \theta -\ln R\cos \theta -\Theta \sin \theta =0.
 \] 
Therefore, applying Green's theorem to (\ref{7}), we obtain the conclusion of the theorem. 
 \end{proof}
 \begin{example}
 \label{E6}
{\rm By Example \ref{E3}, $(\mathbf{z})^*=\mathbf{e}^{\frac{1}{\mathbf{l}(\mathbf{z})}}$. We are seeking to find the analog of this formula in complex calculus. In the real case, this formula states
 \[
 e^{\frac{1}{x}}=f^*(x)=e^{\frac{f'(x)}{f(x)}}\ \Rightarrow \ (\ln f(x))'=\frac{1}{x}\ \Rightarrow \ f(x)=x+c.
 \]   
Therefore, the equality $(\mathbf{z})^*=\mathbf{e}^{\frac{1}{\mathbf{l}(\mathbf{z})}}$ is a complex *version of $(x)'=1$. Then what is the analog of 
 \[
 \oint _{|z|=a}z\,dz=0
 \]
in complex *calculus? The analog of the counterclockwise oriented circle $|z|=a$ in $\mathbb{C}$ is the contour $C$ in $\mathbb{B}$ with the parameterization $(r,\theta )=(a, t)$, $-\pi \le t\le \pi $, which is no longer closed in $\mathbb{B}$. It has distinct initial and end points $(a,-\pi )$ and $(a,\pi )$, respectively, in disjoint branches. On the basis of Theorem \ref{T1} and Example \ref{E3}, we can calculate
 \[
 \int _C \big( \mathbf{e}^{\frac{1}{\mathbf{l}(\mathbf{z})}}\big)^{d\mathbf{z}}=\frac{\mathbf{z}_2}{\mathbf{z}_1}=\frac{(a,\pi )}{(a,-\pi )}=(1,2\pi )=\mathbf{e}^{2\pi i}.
 \]
While expecting to obtain the $b$-number $(1,0)$ (e.g. $c$-number 1 that is a neutral element of multiplication), we obtained $(1,2\pi )$. This is because in $\mathbb{C}$ the $b$-numbers $(1,0)$ and $(1,2\pi )$ are identified.} 
 \end{example} 
 \begin{example}
 \label{E7}
{\rm By Example \ref{E4}, $(\mathbf{log}\, \mathbf{z})^*=\mathbf{e}^{\frac{1}{\mathbf{l}(\mathbf{z})\mathbf{log}\, \mathbf{z}}}$. In the real case, this formula states
 \[
 e^{\frac{1}{x\ln x}}=f^*(x)=e^{\frac{f'(x)}{f(x)}}\ \Rightarrow \ (\ln f(x))'=\frac{1}{x\ln x}\ \Rightarrow \ f(x)=\ln x+c.
 \]   
Therefore, the equality $(\mathbf{log }\,\mathbf{z})^*=\mathbf{e}^{\frac{1}{\mathbf{l}(\mathbf{z})\mathbf{log}\, \mathbf{z}}}$ is a complex *version of $(\ln x)'=\frac{1}{x}$. Based on this we would like to find *version of the important formula
 \[
 \oint _{|z|=a}\frac{dz}{z}=2\pi i\ \text{for}\ a>0. 
 \]
Similar to Example \ref{E6}, we look to the contour $C$ in $\mathbb{B}$ with parameterization $(r,\theta )=(a, t)$, $-\pi \le t\le \pi $. On the basis of Theorem \ref{T1} and Example \ref{E4}, we have
 \[
 \int _C\big( \mathbf{e}^{\frac{1}{\mathbf{l}(\mathbf{z})\mathbf{log}\, \mathbf{z}}}\big) ^{d\mathbf{z}}=\frac{\mathbf{log}\, (a,\pi )}{\mathbf{log}\, (a,-\pi )}=e^{2i\arctan \frac{\pi }{\ln a}}. 
 \] 
The dependence on $a$ of the calculated result looks like strange, but this is accumulated from *nature of calculations. Indeed, converting the result from exp-arithmetic to ordinary arithmetic by formula $z_1-z_2=\mathbf{log}\, (\mathbf{e}^{z_1}\ominus _\mathrm{exp}\mathbf{e}^{z_2})$, we obtain exactly $2\pi i$:
 \begin{align*}
 \mathbf{log}\, \big( \mathbf{e}^{\mathbf{log}\, (a,\pi )}\ominus _\mathrm{exp}\mathbf{e}^{\mathbf{log}\, (a,-\pi )}\big) 
  & =\mathbf{log}\, \mathbf{e}^{\ln a+i\pi -\ln a+i\pi }\\
  & =\mathbf{log }\,(e^0,2\pi )=\ln 1+2\pi i=2\pi i.
 \end{align*}
This example demonstrates that the *integral responds to residues in a specific form. More precisely, the residues appear as a result of calculation of *integrals over non-closed contours by using the fundamental theorem of *calculus. In other words, the residue theory, which is a some sort of attachment to fundamental theorem of complex calculus and covers the cases out of this theorem, now becomes an integral part of fundamental theorem of complex *calculus.  }
 \end{example} 
 

\section{Conclusion}

On the basis of the preceding discussion we find it reasonable to develop other topics of elementary complex calculus in multiplicative frame in details with the aim to propose an alternative course on elementary complex calculus. The draft outline of this course may be seen as follows:
 \begin{description}
 \item [$\bullet $] Definition of $\mathbb{C}$ and algebra on $\mathbb{C}$ including the de'Moivre's formula. The Euler's formula, periodic exponential function, multivalued logarithmic and power functions. Complications related to multivalued functions.
 \item [$\bullet $] Definition of $\mathbb{B}$ without mentioning that it is the Riemann surface of log-function. Algebra on $\mathbb{B}$ emphasizing on global nature of multiplication on $\mathbb{B}$.
 \item [$\bullet $] Review on non-Newtonian calculi in the real case emphasizing on *calculus. Pointing out that *calculus is a presentation of CALCULUS and it is equivalent to Newtonian presentation. 
  \item [$\bullet $] Definitions of elementary functions (logarithmic, exponential, power, trigonometric, hyperbolic) over $\mathbb{B}$. 
  \item [$\bullet $] Limit and continuity of $\mathbf{f}(\mathbf{z})=(R(r,\theta ),\Theta (r,\theta ))$. Although this part was not concerned in the preceding discussion, it can be developed on the basis of limit and continuity of functions $R$ and $\Theta $ of two real variables. It is suitable to discuss limits at zero and infinity. 
 \item [$\bullet $] *Differentiation with Cauchy--Riemann conditions from Lemma \ref{L1}(a). The *derivative of $\mathbf{f}(\mathbf{z})=(R(r,\theta ),\Theta (r,\theta ))$ can be developed by using *derivatives of real-valued function $R$ and $\Theta $ through limit and then Definition \ref{D1} can be obtained.
 \item [$\bullet $] *Integration  with fundamental theorem of *calculus. Again *integrals can be developed by using of integral products (instead of integral sums) and then Definition \ref{D2} can be obtained. 
 \item [$\bullet $] No need for residues and singularities in complex *calculus since complex *integrals  can be calculated just by fundamental theorem of *calculus. Also, the functions of complex *calculus have no singularities leading to residues. Removal of this part of complex calculus would provide a great economy of time which can be used for next items.
 \item [$\bullet $] Conformal mappings. This can be developed in *context in more details because of saving time. The preceding discussion does not include this topic which needs additional investigation.
 \item [$\bullet $] In addition, working on new topics in *context is recommended. For example, complex Fourier series in *context seems to be appropriate. The preceding discussion does not include this topic. It is notable that Fourier series fit to Newtonian calculus inappropriately in comparison to Taylor series because the reference function of Newtonian calculus is linear. Fourier series do not fit to real *calculus as well because sine and cosine functions have zeros. We think that an appropriate calculus for Fourier series is complex *calculus over $\mathbb{B}$ because its reference function is non-vanishing exponential function $\mathbf{e}^z$ and also complex Fourier series have exponential form. This issue needs additional investigation.     
 \end{description}

Of course, there are a lot of problems related to interpretation of remaining theorems of complex analysis (elementary or not) in *form. Besides conformal mappings and Fourier series, mentioned previously, it is challenging a study of the the fundamental theorem of algebra in *context. Noting that in *calculus polynomials become products of power functions and motivated from the single-valued nature of power functions in complex *calculus, fundamental *theorem of algebra is expecting to be seen as an existence of a unique solution. 

This paper just demonstrates that the Riemann surface of complex logarithm can be replaced by methods of multiplicative calculus. This raises the following challenging question: Can complex analysis on Riemann surfaces be easily covered by consideration different non-Newtonian calculi?  This requires a wide research of interested experts in the field. We expect many items of complex analysis to be changed, mainly being simplified.
 

 \end{document}